\documentclass[11pt]{article}
\usepackage{graphicx} 
\usepackage{a4wide}
\usepackage{amsfonts}
\usepackage{amsmath}
\usepackage[unicode=true,psdextra]{hyperref}
\usepackage{amsthm}
\usepackage{graphicx}
\usepackage{comment}
\title{The generalised energy identity and length of necks for $\varepsilon$-harmonic maps}
\author{Andrew M. Roberts\footnote{School of Mathematics, University of Leeds, Leeds, LS2 9JT, United Kingdom\\ A.M.Roberts@leeds.ac.uk}}

\newtheorem{theorem}{Theorem}[section]

\newtheorem{lemma}[theorem]{Lemma}

\newtheorem{corollary}[theorem]{Corollary}

\usepackage{xcolor}
\usepackage{hyperref}
\hypersetup{
    colorlinks=true,
    linkcolor=olive,
    filecolor=magenta,
    urlcolor=orange,
    citecolor=purple
    }

\begin{document}

\maketitle
\begin{abstract}
    In this paper we find analogues for $\varepsilon$-harmonic maps to the generalised energy identity and the existence of geodesic necks result discovered by Yuxiang Li and Youde Wang for $\alpha$-harmonic maps. In particular there exist specific quantities depending only on $\varepsilon$ and the bubbling radius which entirely determine if the full energy identity holds and if a neck forms. In the case these fail we can calculate the energy lost and the length of the geodesic neck based on only these quantities and the biharmonic energy of the bubble.
\end{abstract}

\section{Introduction}

The Dirichlet energy is one of the most widely studied objects in geometric analysis. Let $(M^2,g)$, $(N^n,h)$ be Riemannian manifolds and $u\in W^{1,2}(M,N)$. The Dirichlet energy is defined as follows
\begin{equation*}
    E[u]=\int_M|\nabla u|^2.
\end{equation*}
Critical points of this energy are known as harmonic maps.

Both regularity theory and existence are difficult for this functional.
In order to deal with the existence problem Sacks and Uhlenbeck introduced in \cite{Sacks_Uhlenbeck} the notion of $\alpha$-harmonic maps which arise as critical points of a perturbed energy functional. Regularity for these maps is much easier and the functional does indeed satisfy the Palais-Smale conditions, so in particular energy minimisers within homotopy classes exist. As $\alpha$ tends to 1 their perturbed functional returns to the Dirichlet energy so if we take a sequence of $\alpha_k$-harmonic maps $u_k$ with $k\rightarrow0$ we hope that this will converge to a fully harmonic map. This does indeed happen weakly in $W^{1,2}$ and in $C^\infty_{\text{loc}}$ away from a finite number of points. At these points we observe the standard bubbling phenomenon, when rescaled we obtain harmonic maps from the sphere.

In \cite{Lamm_epsilon} Lamm introduced the notion of $\varepsilon$-energy for $u\in W^{2,2}(M,N)$ defined as
\begin{equation}
    E_{\varepsilon}[u]=\int_M|\nabla u|^2+\varepsilon|\Delta u|^2.
\end{equation}
We call $\varepsilon$-harmonic maps critical points of this energy. This again satisfies Palais-Smale and has nice regularity theory. Note that the Laplacian here is the extrinsic Laplacian obtained by viewing $u$ as a map $M\rightarrow N\hookrightarrow\mathbb{R}^l$ and so the energy depends on the embedding into Euclidean space chosen for $N$. Using the tension field instead would not give us any benefits as harmonic maps would still be critical points of our `intrinsic-$\varepsilon$-energy'. This offers the added complications from being a higher order term and the Euler-Lagrange equation becomes a 4th order equation, it however has the added benefit of linearity which considerably eases some of the computations.

One of the questions we can ask is what happens to the energy in the limit. The goal is to show that no energy is lost, in other words the energy in the limit is totally captured by the energy of the limit map plus the energy of the "bubble maps".
This is known to be true in some cases and false in other cases. In the case of harmonic maps this is true, as shown by Parker \cite{Parker_bubble}. In both the $\alpha$ and $\varepsilon$ cases it is true when the target is a homogeneous manifold, see \cite{Lamm_epsilon}, \cite{li_wang_general_EI}, \cite{li_zhu} and \cite{bayer_roberts}. The energy identity has also been shown to be true for min-max sequences of both $\varepsilon$- and $\alpha$-harmonic maps by Lamm in \cite{Lamm_minmax}. However in general, the energy identity can fail, see \cite{Li_Wang_energy_counterexample} and \cite{bayer_roberts}, these constructions work by varying homotopy class.

For $\alpha$-harmonic maps Li and Wang, \cite{li_wang_general_EI}, discovered a generalised energy identity. This in particular gives them a critical criterion for the energy identity to hold.
\begin{theorem}\label{theorem: generalised EI alpha alpha}
    Let $u_k\in C^\infty(M,N)$ be a sequence of $\alpha_k$-harmonic maps with $0<\alpha_k\rightarrow1$ and $E_{\alpha_k}[u_k]$ uniformly bounded. There exists a finite set of points of energy concentration, $\{x_i\}_{i=1}^I$ and $u_\infty\in C^\infty(M,N)$ a harmonic map such that
    \begin{equation*}
        u_k\rightarrow u_\infty \text{  in }C^l_{\text{loc}}(M\setminus\{x_i\}_{i=1}^I)\text{ for all }l\geq0.
    \end{equation*}
    Now, for each $i$, fix some local isothermal coordinates in some ball $B_{\lambda_i}(x_i)\subset M$,
    \begin{equation*}
        F_i:\big(B_{\lambda_i}(x_i),g\big)\rightarrow \big(\mathbb{D}_{\gamma_i}(0),e^{2\rho_i(x)}(dx_1^2+dx_2^2)\big),
    \end{equation*}
    where each $F_i$ is a smooth isomorphism with $F_i(x_i)=0$ and $\rho_i\in C^\infty(\mathbb{D}_{\gamma_i}(0))$. 
    Then there exists a finite number of bubbles $\{\omega_{i,j}\}_{j=1}^{J(i)}\in C^\infty(\mathbb{S}^2,N)$ non trivial harmonic maps, $x_{i,j,k}\in \mathbb{D}_{\gamma_i}(0)$ sequences of points and $r_{i,j,k}\in\mathbb{R}^{>0}$ such that
    \begin{align*}
        u_k(F_i^{-1}(x_{i,j,k}+r_{i,j,k}\cdot))\rightarrow\omega_{i,j}(\pi^{-1}(\cdot))&\text{  in }C^l_{\text{loc}}(B_R(0)\setminus S_{i,j})\text{ for all }l\geq0,R\geq0,\\
        x_{i,j,k}\rightarrow 0,&\\
        r_{i,j,k}\rightarrow0,&\\
        \textnormal{max}\{\frac{r_{i,j_1,k}}{r_{i,j_2,k}},\frac{r_{i,j_2,k}}{r_{i,j_1,k}},\frac{|x_{i,j_1,k}-x_{i,j_2,k}|}{r_{i,j_1,k}+r_{i,j_2,k}}\}\rightarrow\infty&\text{ whenever }j_1\neq j_2,
    \end{align*}
    where $\pi:\mathbb{S}^2\rightarrow \mathbb{R}^2$ is the stereographic projection and $S_{i,j}$ is the finite, possibly empty, set of points where further bubbles form on $\omega_{i,j}$.
    We then have the energy identity
    \begin{equation*}
        \lim_kE_{\alpha_k}[u_k]=E[u_\infty]+\sum_{i}^I\sum_j^{J(I)}\mu_{i,j}^2 E[\omega_{i,j}],
    \end{equation*}
    where
    \begin{equation*}
        \mu_{i,j}=\lim_kr_{i,j,k}^{1-\alpha_k}.
    \end{equation*}
\end{theorem}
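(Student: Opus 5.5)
The plan follows the standard bubble-tree scheme of Sacks--Uhlenbeck, augmented by a precise analysis of the neck region in the spirit of Li--Wang. Recall $E_\alpha[u]=\int_M (1+|\nabla u|^2)^\alpha$.

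\textbf{Step 1: compactness away from the energy concentration points.} The small energy $\varepsilon$-regularity for $\alpha$-harmonic maps, with constants uniform as $\alpha\to1$, gives the $C^l_{\text{loc}}$ convergence to a harmonic map $u_\infty$ away from finitely many points. Removable singularities show that $u_\infty$ is smooth on all of $M$.

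\textbf{Step 2: extracting bubbles.} Near each $x_i$ we work in the isothermal chart $F_i$. We choose centres $x_{i,j,k}$ and scales $r_{i,j,k}$ by the usual maximal-energy-concentration selection. Rescale $v_k(x)=u_k(F_i^{-1}(x_{i,j,k}+r_{i,j,k}x))$. Then $v_k$ is a critical point of
\[
\int (r_{i,j,k}^2 e^{2\rho}+|\nabla v|^2)^{\alpha_k}\,r_{i,j,k}^{2-2\alpha_k}\,dx ,
\]
so the rescaled equation has a harmless lower-order term. Hence $v_k\to\omega_{i,j}$ locally smoothly, and the limit extends over $\mathbb{S}^2$. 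An induction on energy (each bubble carries at least $\epsilon_0$) yields finitely many bubbles with the stated separation property.

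On the region $B_R$ in bubble coordinates, the energy density satisfies
\[
(1+|\nabla u_k|^2)^{\alpha_k}\approx |\nabla u_k|^{2\alpha_k} = r_{i,j,k}^{-2\alpha_k}|\nabla v_k|^{2\alpha_k}.
\]
Integrating against $dx = r^2\,dy$ gives
\[
r_{i,j,k}^{2-2\alpha_k}\int_{B_R}|\nabla v_k|^{2\alpha_k}\to \mu_{i,j}^2E[\omega_{i,j}] .
\]
Here $\mu_{i,j}=\lim r^{1-\alpha_k}\in[1,\infty)$ after passing to a subsequence; it is bounded because the energy is bounded. This accounts for the bubble part of the identity, and the "$1$" part of the integrand contributes only the volume, which vanishes on shrinking balls.

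\textbf{Step 3: neck analysis (the main obstacle).} It remains to show that the energy on each neck annulus
\[
A_k=\{\delta^{-1}... : R r_k\le |x-x_k|\le \delta\}
\]
tends to zero as $R\to\infty$, $\delta\to0$. The angular part of the energy is handled by three-circle / small-energy decay estimates on dyadic annuli, which give exponential decay in $\log$-scale of the tangential energy. The radial part is controlled by a Pohozaev-type identity for the $\alpha$-harmonic equation, whose extra term is proportional to $(\alpha_k-1)$ times
\[
\int (1+|\nabla u|^2)^{\alpha-1}|\nabla u|^2\cdot\frac{\partial_r u\, x\cdot\nabla u\,}{1+|\nabla u|^2}.
\]
The key point, where I expect the difficulty, is to show that this error integrated over the neck is bounded by $C(\alpha_k-1)\log(\delta/(Rr_k))$ times small quantities. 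This gives
\[
\int_{A_k}|\partial_r u|^2\lesssim \int_{A_k}|\partial_\theta u|^2 + o(1),
\]
and we must check that this is $o(1)$ uniformly. I would first try to bound $(\alpha_k-1)\log r_k$, which is bounded precisely because $\mu$ is finite. Combining these estimates with the pointwise gradient decay $|\nabla u|\le o(1)/|x|$ on the neck, the neck energy is $o(1)$. Summing over all bubbles and the body then gives the identity, with the factor $\mu_{i,j}^2$ arising exactly from the rescaling computation in Step 2.
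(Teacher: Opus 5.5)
The paper gives no proof of this statement (it is quoted from Li--Wang), so your proposal has to stand on its own. Steps 1 and 2 are standard and fine. In particular, your computation that $B_{Rr_k}$ carries $\alpha$-energy $\lim_k r_k^{2-2\alpha_k}E[\omega]$ is correct.

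The genuine gap is Step 3. The energy on the neck does \emph{not} tend to zero in general, and computing it is the whole content of the theorem. Your mechanism fails because the Pohozaev term is not $(\alpha_k-1)\log(1/r_k)$ times something small. At a neck radius $t$ the extra term is a multiple of $(\alpha_k-1)\int_{B_t}\frac{\nabla|\nabla u_k|^2\cdot\nabla u_k}{1+|\nabla u_k|^2}\,r\partial_r u_k\,dx$. This is an integral over the \emph{whole} ball, so it contains the bubble. By conformality of $\omega$ the bubble part is $-E[\omega]+o(1)$; this is the same computation the paper uses at the end of its proof of Theorem~\ref{theorem: generalised EI dirichlet alpha}. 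So the bubble alone forces $t\int_{\partial B_t}|\partial_r u_k|^2\approx 2(\alpha_k-1)E[\omega]$ at every neck scale. Integrating against $dt/t$ over $(Rr_k,\delta)$ then gives a radial energy $\approx 2(\alpha_k-1)\log(1/r_k)\,E[\omega]$. This is bounded, as you observe, but it is $o(1)$ only if $\lim_k r_k^{1-\alpha_k}=1$, i.e.\ exactly when there is no energy defect. Neither the angular decay nor the bound $|\nabla u_k|\le o(1)/|x|$ can rescue this: the latter only yields $o(1)\log(1/r_k)$ over a neck of logarithmic length.

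The missing idea is to \emph{evaluate} the neck energy rather than show it vanishes. After discarding the angular part, read the Pohozaev identity as an integral equation in $\tau=\log(1/t)$ for the radial flux. Keep the fact that $\int_{B_t}$ also contains the neck's own energy on $A(Rr_k,t)$. When there is a defect this is comparable to the bubble term ($O(1)$, not $o(1)$) and produces a feedback; in stress-tensor form the source is $2(\alpha_k-1)\int_{B_t}(1+|\nabla u_k|^2)^{\alpha_k-1}|\nabla u_k|^2$. Solving gives an exponential profile in $\tau$. Integrating over $0<\tau<\log(1/r_k)$ yields neck $\alpha$-energy $(\tilde\mu^2-\tilde\mu)E[\omega]$ with $\tilde\mu=\lim_k r_k^{2-2\alpha_k}$, to be added to the bubble's $\tilde\mu E[\omega]$.

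This also shows that your final formula agrees with the displayed one only through two compensating slips. With $\mu=\lim_k r_k^{1-\alpha_k}$, your bubble term is already $\mu^2E[\omega]$, so once the neck is seen to carry positive energy the total cannot be $\mu^2E[\omega]$. The paper's own remark is that Li--Wang find $\mu E[\omega]$ on the bubble and $(\mu^2-\mu)E[\omega]$ on the neck; the normalisation consistent with this is $\mu=\lim_k r_k^{2-2\alpha_k}$. Finally, with your $E_\alpha=\int_M(1+|\nabla u|^2)^\alpha$, the body term should also include $\mathrm{Vol}(M)$.
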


So the energy in the limit only depends on the limiting map, the bubble and our special ratios $\mu_{i,j}$.
This also says that the full energy identity holds if and only if all of our $\mu_{i,j}\rightarrow1$. We note here that $r_{i,j,k}$ and $\omega_{i,j}$ are only unique up to rescaling but a different choice will not affect $E[\omega_{i,j}]$ or $\mu_{i,j}$.

We would now like to try and find an equivalent formulation for $\varepsilon$-harmonic maps. In \cite{Lamm_minmax} Lamm shows that the energy identity holds along min-max sequences of $\varepsilon$-harmonic maps. In particular, they show that the energy identity holds if
\begin{equation*}
    \lim_{k\rightarrow\infty}\varepsilon_k \log(1/\varepsilon_k)\int_M|\Delta u_k|^2\rightarrow0.
\end{equation*}
In the proof of this they in fact showed a stronger formulation of the result with $\log(1/\varepsilon_k)$ replaced by $\log(1/r_k)$, for some choice of the bubbling radius. By examining the blow up we know that $\int_M|\Delta u_k|^2\sim \frac{1}{r_k^2}$ so one now has that the energy identity holds if
\begin{equation*}
    \lim_{k\rightarrow\infty}\frac{\varepsilon_k}{r_k^2}\log(1/ r_k)\rightarrow0.
\end{equation*}
We find that this stronger statement is actually an if and only if.
\begin{theorem}\label{theorem: generalised EI epsilon}
    Let $u_k\in C^\infty(M,N)$ be a sequence of $\varepsilon_k$-harmonic maps with $0<\varepsilon_k\rightarrow0$ and $E_{\varepsilon_k}[u_k]$ uniformly bounded. There exists a finite set of points of energy concentration, $\{x_i\}_{i=1}^I$ and $u_\infty\in C^\infty(M,N)$ a harmonic map such that
    \begin{equation*}
        u_k\rightarrow u_\infty \text{  in }C^l_{\text{loc}}(M\setminus\{x_i\}_{i=1}^I)\text{ for all }l\geq0.
    \end{equation*}
    Now, for each $i$, fix some local isothermal coordinates in some ball $B_{\lambda_i}(x_i)\subset M$,
    \begin{equation*}
        F_i:\big(B_{\lambda_i}(x_i),g\big)\rightarrow \big(\mathbb{D}_{\gamma_i}(0),e^{2\rho_i(x)}(dx_1^2+dx_2^2)\big),
    \end{equation*}
    where each $F_i$ is a smooth isomorphism with $F_i(x_i)=0$ and $\rho_i\in C^\infty(\mathbb{D}_{\gamma_i}(0))$. 
    Then there exists a finite number of bubbles $\{\omega_{i,j}\}_{j=1}^{J(i)}\in C^\infty(\mathbb{S}^2,N)$ non trivial harmonic maps, $x_{i,j,k}\in\mathbb{D}_{\gamma_i}$ sequences of points and $r_{i,j,k}\in\mathbb{R}^{>0}$ such that
    \begin{align*}
        u_k(F_i^{-1}(x_{i,j,k}+r_{i,j,k}\cdot))\rightarrow\omega_{i,j}(\pi^{-1}(\cdot))&\text{  in }C^l_{\text{loc}}(B_R(0)\setminus S_{i,j})\text{ for all }l\geq0,R\geq0,\\
        x_{i,j,k}\rightarrow 0,&\\
        r_{i,j,k}\rightarrow0,&\\
        \textnormal{max}\{\frac{r_{i,j_1,k}}{r_{i,j_2,k}},\frac{r_{i,j_2,k}}{r_{i,j_1,k}},\frac{|x_{i,j_1,k}-x_{i,j_2,k}|}{r_{i,j_1,k}+r_{i,j_2,k}}\}\rightarrow\infty&\text{ whenever }j_1\neq j_2,
    \end{align*}
    where $\pi:\mathbb{S}^2\rightarrow \mathbb{R}^2$ is the stereographic projection and $S_{i,j}$ is the finite, possibly empty, set of points where further bubbles form on $\omega_{i,j}$.
    We then have the energy identity
    \begin{equation*}       \lim_{k\rightarrow\infty}E_{\varepsilon_k}[u_k]=E[u_\infty]+\sum_{i}^I\sum_j^{J(I)}E[\omega_{i,j}]+2\sum_{i}^I\sum_j^{J(I)}e^{-2\rho_i(0)}\mu_{i,j}\int_{\mathbb{R}^2}|\Delta \omega_{i,j}|^2dx,
    \end{equation*}
    where
    \begin{equation*}
        \mu_{i,j}=\lim_{k\rightarrow\infty}\frac{\varepsilon_k}{r_{i,j,k}^2}\log(1/r_{i,j,k}).
    \end{equation*}
\end{theorem}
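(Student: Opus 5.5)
The first part of the statement (convergence away from finitely many points, existence of bubbles $\omega_{i,j}$ with the stated separation of scales) is the standard bubble-tree construction for $\varepsilon$-harmonic maps from \cite{Lamm_epsilon}, \cite{Lamm_minmax}, so I would take it as given. The content is the energy formula. Since the bubble regions and the region away from concentration points account for $E[u_\infty]+\sum E[\omega_{i,j}]$ in the limit, the plan reduces to analysing a single neck. By induction over the bubble tree, and after rescaling, I would treat one neck region $A_k=\mathbb{D}_{\delta}\setminus\mathbb{D}_{Rr_k}$ around one bubble with $x_k=0$. On $A_k$ I must show that the Dirichlet energy vanishes as $\delta\to0$, $R\to\infty$. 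I must also show that the $\varepsilon_k$-term over the whole disc converges to $2e^{-2\rho(0)}\mu\int|\Delta\omega|^2$.

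For the $\varepsilon$-term I would first observe that on the bubble region itself $\varepsilon_k\int_{\mathbb{D}_{Rr_k}}|\Delta u_k|^2\approx \varepsilon_k r_k^{-2}e^{-2\rho(0)}\int_{\mathbb{D}_R}|\Delta\omega|^2$. This is $o(1)$ unless $\varepsilon_k/r_k^2\not\to0$, which is incompatible with finite $\mu$ and a nontrivial harmonic limit bubble. So all of the lost energy must come from the neck. The key tool is a Pohozaev-type identity for the fourth-order Euler–Lagrange equation, obtained by testing against $x\cdot\nabla u_k$ on circles $\partial\mathbb{D}_t$. This gives
\begin{equation*}
\int_{\partial \mathbb{D}_t}\Big(|u_r|^2-\tfrac{1}{t^2}|u_\theta|^2\Big)\,t = c\,\varepsilon_k\int_{\mathbb{D}_t}|\Delta u_k|^2 + \text{(boundary terms in }\varepsilon_k\nabla^3u,\ \varepsilon_k\nabla^2 u\text{)} .
\end{equation*}
Combined with the standard angular-energy decay on necks (three-circle / small-energy $\varepsilon$-regularity arguments), this shows that the angular part is negligible. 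The radial part then satisfies $\int_{\partial\mathbb{D}_t}|u_r|^2 t\approx \varepsilon_k t^{-2}\cdot t^{0}\cdot$const. More precisely, for $t\gg r_k$ we have $\varepsilon_k\int_{\mathbb{D}_t}|\Delta u_k|^2\approx \varepsilon_k r_k^{-2}e^{-2\rho(0)}\int_{\mathbb{R}^2}|\Delta\omega|^2$, which is essentially constant in $t$. Integrating $\int_{\partial\mathbb{D}_t}|u_r|^2\,dt$ over $t\in(Rr_k,\delta)$ against the measure $dt/t$ then produces the factor $\log(\delta/(Rr_k))\sim\log(1/r_k)$. This gives the neck Dirichlet energy $\approx \mu\, e^{-2\rho(0)}\int|\Delta\omega|^2$ times a constant. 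A second contribution of the same size comes from $\varepsilon_k\int_{A_k}|\Delta u_k|^2$ itself, since on the neck $u_k$ behaves like $a\log r$ with $|a|^2\sim\varepsilon_k/r_k^2$. This yields the factor $2$ after the constants in the Pohozaev identity are tracked. The conformal factor enters only through $\Delta_g=e^{-2\rho}\Delta$ evaluated at $0$.

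The main obstacle is controlling the boundary terms of the Pohozaev identity involving $\varepsilon_k\nabla^3u_k$ and $\varepsilon_k\nabla^2 u_k\cdot\nabla u_k$ uniformly across all scales $t\in(Rr_k,\delta)$. I would first try rescaled $\varepsilon$-regularity on dyadic annuli. Here the relevant local parameter is $\varepsilon_k/t^2$, which is small on the neck, so the equation is a small perturbation of the harmonic map equation. I would then show these terms are $o(\varepsilon_k r_k^{-2})$ after integration in $dt/t$. A further point is to rule out the cases $\mu=\infty$ or $\varepsilon_k/r_k^2\to c>0$ by the uniform energy bound, so that $\mu$ is finite along a subsequence. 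Finally, summing over all necks in the bubble tree gives the stated formula.
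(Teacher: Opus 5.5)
Your architecture matches the paper's: the stress--energy (Pohozaev) identity on circles $\partial B_t$, and the fact that $\int_{B_t}e^{-2\rho}|\Delta u_k|^2\approx r_k^{-2}e^{-2\rho(0)}\int_{\mathbb{R}^2}|\Delta\omega|^2$ is essentially constant for $Rr_k<t<R_0$, so integrating over the neck produces $\log(1/r_k)$. The angular energy is negligible (the paper uses Lamm's tangential estimate), and the boundary terms are controlled by the pointwise bounds $|\nabla^j u_k|\le C\sqrt{\delta}\,|x|^{-j}$. Finally, $\frac{\varepsilon_k}{r_k^2}\log(1/r_k)$ is bounded by the energy bound together with $\int|\Delta\omega|^2>0$.

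However, your bookkeeping of where the defect lives is wrong, and so is your explanation of the factor $2$. Your opening plan says the Dirichlet energy on $A_k$ tends to zero while the $\varepsilon_k$-term tends to $2e^{-2\rho(0)}\mu\int|\Delta\omega|^2$. Both claims are false whenever $\mu>0$. The entire $\varepsilon_k$-energy vanishes in the limit: on $B_{Rr_k}$ it is $\approx\frac{\varepsilon_k}{r_k^2}e^{-2\rho(0)}\int_{B_R}|\Delta\omega|^2$, and on the neck it is at most $C\delta\,\varepsilon_k/(Rr_k)^2$ by the pointwise bound. Since $\varepsilon_k/r_k^2\to0$, both go to zero. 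The defect is purely Dirichlet energy on the neck.

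So the ``second contribution of the same size'' from $\varepsilon_k\int_{A_k}|\Delta u_k|^2$ does not exist; that term carries no $\log(1/r_k)$ factor. Your heuristic undercuts itself: $a\log r$ is harmonic on the punctured disc. On a geodesic neck $\Delta u_k$ is therefore only the second fundamental form term, of size $|a|^2/r^2$, and heuristically $\varepsilon_k\int_{A_k}|\Delta u_k|^2\lesssim(\varepsilon_k/r_k^2)^3$.

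The factor $2$ sits entirely in the Pohozaev constant. Testing with $x\cdot\nabla u$ gives $\int_{B_t}\Delta u\cdot(x\cdot\nabla u)=\frac{t}{2}\int_{\partial B_t}(|u_r|^2-t^{-2}|u_\theta|^2)$. On the other side, $\Delta(x\cdot\nabla u)=2\Delta u+x\cdot\nabla\Delta u$ gives $\int_{B_t}\Delta(e^{-2\rho}\Delta u)\cdot(x\cdot\nabla u)=\int_{B_t}e^{-2\rho}|\Delta u|^2$, up to boundary terms and a term weighted by $x\cdot\nabla\rho=O(t)$. Hence your $c$ equals $2$, i.e. $\int_{\partial B_t}(|u_r|^2-t^{-2}|u_\theta|^2)\,dS=\frac{2\varepsilon}{t}\int_{B_t}e^{-2\rho}|\Delta u|^2+\dots$, exactly as in the paper.

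The radial Dirichlet energy of the neck alone is therefore $2\mu e^{-2\rho(0)}\int|\Delta\omega|^2+o(1)$. Adding your extra neck contribution on top of a correctly computed $c$ would give the wrong constant. Compute $c$ explicitly and drop that claim.
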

We note here again that $r_{i,k}$ and $\omega_i$ are not uniquely defined however in this case our choice of $r_{i,k}$ affects $\mu_{i,k}$. We also have a term coming from the choice of isothermal coordinates. These occur, due to the fact that the parametrisation of the bubble affects the biharmonic energy. It is however easy to see that our choice does not change whether we have $\mu_i=0$ or not.

\vspace{5mm}

This also implies a stronger a priori bound for $\varepsilon$-harmonic maps. In Lamm's construction of the bubbling procedure in \cite{Lamm_epsilon} he shows that $\frac{\varepsilon_k}{r_k^2}\rightarrow0$. As an immediate consequence of Theorem \ref{theorem: generalised EI epsilon} we can improve this to the following.
\begin{corollary}
    In the setting of Theorem \ref{theorem: generalised EI epsilon} there exists $C\in(0,\infty)$ such that
    \begin{equation*}
        \frac{\varepsilon_k}{r_{i,k}^2}\log(1/r_{i,k})\leq C
    \end{equation*}
    for all $i$ and for all $k$ large enough.
\end{corollary}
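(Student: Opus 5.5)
The plan is to derive the bound directly from the energy identity in Theorem \ref{theorem: generalised EI epsilon}, using the uniform energy bound and the fact that every term in that identity is nonnegative. I will argue by contradiction. Suppose no such $C$ exists. Since there are only finitely many pairs $(i,j)$, after relabelling and passing to a subsequence there is a fixed pair, written $(i,j)$ with $r_{i,k}=r_{i,j,k}$, such that $\frac{\varepsilon_k}{r_{i,k}^2}\log(1/r_{i,k})\to\infty$.

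Next I apply Theorem \ref{theorem: generalised EI epsilon} along this subsequence. The hypotheses are preserved under passing to subsequences, and after a further extraction the bubbles, the scales and the limits $\mu_{i,j}\in[0,\infty]$ all exist. The point to check is that the identity still holds when some $\mu_{i,j}=\infty$. In that case it reads $\lim_k E_{\varepsilon_k}[u_k]=+\infty$, because $E[u_\infty]$ and $E[\omega_{i,j}]$ are finite and nonnegative, and $e^{-2\rho_i(0)}\int_{\mathbb{R}^2}|\Delta\omega_{i,j}|^2>0$. Positivity of the last integral holds because $\omega_{i,j}$ is nonconstant: if $\Delta\omega_{i,j}\equiv 0$ on $\mathbb{R}^2$, then $\omega_{i,j}$ would be a bounded harmonic function on $\mathbb{R}^2$, hence constant.

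This contradicts the assumption that $E_{\varepsilon_k}[u_k]$ is uniformly bounded, which gives the claim. The bound is then quantitative. Every term of the identity is nonnegative, so for all $k$ large
\begin{equation*}
\frac{\varepsilon_k}{r_{i,j,k}^2}\log(1/r_{i,j,k})\leq \frac{\sup_k E_{\varepsilon_k}[u_k]+1}{2e^{-2\rho_i(0)}\int_{\mathbb{R}^2}|\Delta\omega_{i,j}|^2},
\end{equation*}
and $C$ is taken to be the maximum of these finitely many constants.

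The main obstacle is confirming that the proof of Theorem \ref{theorem: generalised EI epsilon} does not itself assume $\mu_{i,j}<\infty$. Concretely, one needs the local lower bound $\liminf_k \varepsilon_k\int_{\text{neck}\cup\text{bubble}}|\Delta u_k|^2\geq 2\mu\int|\Delta\omega|^2$ to hold without assuming the limit is finite. I would get this from the scaling $\varepsilon_k\int_{B_{Rr_k}}|\Delta u_k|^2\approx \frac{\varepsilon_k}{r_k^2}e^{-2\rho_i(0)}\int_{B_R}|\Delta\omega|^2$ on the bubble region, together with the neck contribution of order $\frac{\varepsilon_k}{r_k^2}\log(1/r_k)$ coming from the biharmonic correction along the neck. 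Both estimates are monotone lower bounds, so they remain valid when the quantity diverges.
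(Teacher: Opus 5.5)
Your argument is correct and is essentially the paper's: the paper reads the corollary off the step in the proof of Theorem \ref{theorem: generalised EI epsilon} where the Pohozaev identity \eqref{eq: E-L 1}, integrated over the neck without assuming $\mu<\infty$ or $\varepsilon_k/r_k^2\to 0$, gives neck energy at least $\frac{\varepsilon_k}{r_k^2}\log(1/r_k)\big(2e^{-2\rho(0)}\int_{B_R}|\Delta\omega|^2+o(1)\big)-C\big(\frac{\varepsilon_k}{r_k^2}+\sqrt{\delta}\big)$; the uniform energy bound and $\int|\Delta\omega|^2>0$ then finish exactly as you say. One point your sketch should make explicit: this lower bound is not purely monotone, and the error $C\varepsilon_k/r_k^2$ is absorbed only because it lacks the factor $\log(1/r_k)\to\infty$.
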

In fact our proof of Theorem \ref{theorem: generalised EI epsilon} will be written such that it does not rely on the fact that $\frac{\varepsilon_k}{r_k^2}\rightarrow0$ so it can serve as an alternative proof that the bubble must be harmonic. This is a more direct way of seeing this fact.

This allows us to write down an intrinsic criterion for the energy identity which does not require a choice of bubbling radius.
\begin{corollary}
    Under the setup of Theorem \ref{theorem: generalised EI epsilon} we have the fully energy identity if and only if
    \begin{equation*}
        \varepsilon_k\int_M|\Delta u_k|^2\log(\int_M|\Delta u_k|^2)\rightarrow0
    \end{equation*}
    as $k\rightarrow\infty$.
\end{corollary}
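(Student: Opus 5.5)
The plan is to deduce the corollary from Theorem \ref{theorem: generalised EI epsilon} by showing that the quantity $\varepsilon_k\int_M|\Delta u_k|^2\log(\int_M|\Delta u_k|^2)$ is comparable to $\max_{i,j}\frac{\varepsilon_k}{r_{i,j,k}^2}\log(1/r_{i,j,k})$ along subsequences. By Theorem \ref{theorem: generalised EI epsilon}, the full energy identity holds iff every $\mu_{i,j}=0$. Indeed each $\omega_{i,j}$ is non-trivial and harmonic, hence non-constant, so $\int_{\mathbb{R}^2}|\Delta\omega_{i,j}|^2>0$, since a harmonic-in-$\mathbb{R}^l$ bounded map on $\mathbb{R}^2$ would be constant. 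Note also that $\lim E_{\varepsilon_k}[u_k]$ exists after passing to subsequences. So it suffices to show $\varepsilon_k A_k\log A_k\to0$ iff $\mu_{i,j}=0$ for all $i,j$, where $A_k=\int_M|\Delta u_k|^2$. I would argue by subsequences throughout, since the theorem is stated after passing to a subsequence.

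The first step is a two-sided estimate on $A_k$. For the lower bound, fix a bubble with the smallest radius $r_k=\min_{i,j} r_{i,j,k}$. Rescaling and $C^2_{\text{loc}}$ convergence on a compact set $K\subset\mathbb{R}^2\setminus S_{i,j}$ give
\[
A_k\geq\int_{F_i^{-1}(x_{i,j,k}+r_kK)}|\Delta u_k|^2\geq \frac{c}{r_k^2}\int_K|\Delta\omega_{i,j}|^2\geq \frac{c'}{r_k^2},
\]
where the factor $e^{-2\rho_i}$ is bounded. For the upper bound, the theorem's identity together with uniform boundedness of $E_{\varepsilon_k}$ and the corollary above gives $\varepsilon_kA_k\leq C$. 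I also need $\varepsilon_k A_k\leq C\sum_{i,j}\frac{\varepsilon_k}{r_{i,j,k}^2}\log(1/r_{i,j,k})+o(1)$. The cleanest route is to extract from the proof of Theorem \ref{theorem: generalised EI epsilon} the statement it really proves, namely
\[
\varepsilon_kA_k=\sum_{i,j}e^{-2\rho_i(0)}\frac{\varepsilon_k}{r_{i,j,k}^2}\log(1/r_{i,j,k})\int|\Delta\omega_{i,j}|^2\cdot 2+o(1)
\]
along the sequence. Here the bubble regions contribute $O(\varepsilon_k/r^2)\to0$ after the log-weighting fails, and the necks give the log term. I would take this as the content of the identity, since the energy identity is exactly this decomposition with $\int|\nabla u_k|^2\to E[u_\infty]+\sum E[\omega]$ plus neck Dirichlet energy.

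With these in hand the two directions follow. If all $\mu_{i,j}=0$, then $\varepsilon_kA_k\to0$ and $\varepsilon_kA_k\log(1/\varepsilon_k)\to 0$ by Lamm's converse direction. We also have $A_k\leq C\varepsilon_k^{-1}$, so $\log A_k\leq \log(1/\varepsilon_k)+C$. To show $\varepsilon_kA_k\log A_k\to0$, I use $\varepsilon_k\le C r_{k}^2/\log(1/r_k)$, so that
\[
\log A_k\lesssim\log(1/\varepsilon_k)\lesssim \log(1/r_k)
\]
for the largest-weight bubble, and then plug this into the exact decomposition: each term $\frac{\varepsilon_k}{r^2}\log(1/r)\to0$ controls $\varepsilon_kA_k\log A_k$ up to bounded factors, provided $\log(1/r_{i,j,k})\gtrsim \log A_k$ on the dominant bubbles. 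That proviso is the main obstacle. Bubbles with $\varepsilon_k/r^2$ negligible contribute little to $A_k$, but their $\log(1/r)$ could be large, which is harmless; the converse concern is bubbles with large radius relative to the dominant ones. I would handle it by splitting $A_k$ into the contributions $\sim r_{i,j,k}^{-2}$ and using $\log(\sum a)\le\log(\#)+\max\log a$. Conversely, if some $\mu_{i,j}>0$, then $r_{i,j,k}^2\sim\varepsilon_k\log(1/r)$ gives $\log(1/r^2)\sim\log(1/\varepsilon_k)$. The lower bound $A_k\ge c/r_{i,j,k}^2$ then yields $\varepsilon_kA_k\log A_k\geq c\frac{\varepsilon_k}{r^2}\log(1/r^2)\to 2c\mu_{i,j}>0$ (monotonicity of $x\log x$ for large $x$), so the expression does not tend to zero.
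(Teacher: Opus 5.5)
There is a genuine gap. Your lower bound $A_k\geq c\,r_{i,j,k}^{-2}$ is fine, and so is the direction ``some $\mu_{i,j}>0$ implies the quantity does not tend to $0$'', which needs only monotonicity of $x\log x$. The problem is the other direction. There you need an upper bound $\log A_k\leq 2\log(1/r_k)+C$, with $r_k=\min_{i,j}r_{i,j,k}$, and the substitute you propose is false.

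The proof of Theorem \ref{theorem: generalised EI epsilon} shows that the \emph{full} $\varepsilon_k$-energy of the neck carries the term $2e^{-2\rho(0)}\mu\int|\Delta\omega|^2$, not the biharmonic part alone. By \eqref{eq: biharmonic on ball} and \eqref{eq: biharmonic on annulus}, $\varepsilon_kA_k\leq C\varepsilon_k/r_k^2\to0$ in every case. So the lost energy is Dirichlet energy of the neck. Whenever some $\mu_{i,j}>0$, your ``exact decomposition'' of $\varepsilon_kA_k$ assigns a positive limit to a quantity that tends to zero. It also contradicts your later splitting $A_k\sim\sum r_{i,j,k}^{-2}$: if the necks contributed of order $r^{-2}\log(1/r)$ to $A_k$, that splitting would fail.

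Two further steps are wrong. First, $\varepsilon_k\leq Cr_k^2/\log(1/r_k)$ bounds $\log(1/\varepsilon_k)$ from \emph{below}, not above, so $\log(1/\varepsilon_k)\lesssim\log(1/r_k)$ does not follow. Second, there is no converse of Lamm's criterion in \cite{Lamm_minmax} to appeal to; Lamm proves only sufficiency.

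The missing ingredient is the upper bound $A_k\leq Cr_k^{-2}$. It comes from the blow-up analysis, not from the energy identity. You have smooth convergence away from the concentration points and on the bubble regions. On each neck, the $\varepsilon$-regularity estimate \eqref{eq: pointwise small on annulus} gives $\int_{\text{neck}}|\Delta u_k|^2\leq C\int_{|x|>2Rr_{i,j,k}}|x|^{-4}dx\leq Cr_{i,j,k}^{-2}$. This is the fact $\int_M|\Delta u_k|^2\sim r_k^{-2}$ that the paper relies on.

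With $c\,r_k^{-2}\leq A_k\leq Cr_k^{-2}$ you get $\log A_k=2\log(1/r_k)+O(1)$. Hence $\varepsilon_kA_k\log A_k$ lies between constant multiples of $\frac{\varepsilon_k}{r_k^2}\log(1/r_k)$, up to an $O(\varepsilon_k/r_k^2)=o(1)$ error. Since $r\mapsto r^{-2}\log(1/r)$ is decreasing for small $r$, the smallest radius carries the largest $\mu_{i,j}$ (after passing to a subsequence). The corollary then follows at once from Theorem \ref{theorem: generalised EI epsilon}, with no identity for $\varepsilon_kA_k$ needed.
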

\vspace{5mm}

This $\varepsilon$-energy identity is rather different from the $\alpha$ case.
One important difference to note is the location of the energy defect. In the $\varepsilon$ case we can only possibly lose energy on the neck region, whereas in the $\alpha$ case we can potentially lose energy on both the bubble and the neck.
Indeed Li and Wang show that in the limit the $\alpha$-energy is ${\mu}E[\omega]$ on the bubble and $({\mu}^2-\mu)E[\omega]$ on the neck and so energy is lost on the neck if and only if it is also lost on the bubble.

One question we can now ask is what happens to Dirichlet energy in the limit. In the $\varepsilon$-harmonic case this is the same thing as looking at the limit of $\varepsilon$-energy. In the $\alpha$ case we obtain a different identity. One can immediately notice this as we cannot have a Dirichlet energy defect on the bubble.
In fact by following the same steps as the proof of Theorem \ref{theorem: generalised EI epsilon} we obtain the following.
\begin{theorem}\label{theorem: generalised EI dirichlet alpha}
   Under the set-up of Theorem \ref{theorem: generalised EI alpha alpha} we have the following energy identity
    \begin{equation*}       \lim_{k\rightarrow\infty}E[u_k]=E[u_\infty]+\sum_{i}^I\sum_j^{J(I)}(1+2\log\mu_{i,j})E[\omega_{i,j}].
    \end{equation*}
\end{theorem}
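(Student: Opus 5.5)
Throughout, $\alpha$-harmonic maps are critical points of the Sacks--Uhlenbeck functional $E_\alpha[u]=\int_M(1+|\nabla u|^2)^\alpha$. The proof reduces to a neck analysis, since on the body and on the bubbles Dirichlet energy converges. Away from the concentration points $u_k\to u_\infty$ in $C^l_{\text{loc}}$. On each bubble region $x_{i,j,k}+r_{i,j,k}B_R$ the rescaled maps converge smoothly to $\omega_{i,j}$, and Dirichlet energy is conformally invariant. Hence, after letting $\delta\to0$ and $R\to\infty$, the body and bubble regions contribute exactly $E[u_\infty]+\sum E[\omega_{i,j}]$. We may treat a single bubble at each point; the multi-bubble case follows by the standard induction on bubble trees, applied neck by neck. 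What remains is to show that the neck $A_k=B_\delta\setminus B_{Rr_k}$ carries Dirichlet energy $2\log\mu\,E[\omega]$ in the limit.

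The key tool is the Pohozaev-type identity for $\alpha$-harmonic maps, used by Li and Wang. In polar coordinates on an annulus, the Euler--Lagrange equation $\mathrm{div}((1+|\nabla u|^2)^{\alpha-1}\nabla u)\perp T_uN$ is tested against $r\partial_r u$. This gives, for every $t$,
\begin{equation*}
\int_{\partial B_t}(1+|\nabla u|^2)^{\alpha-1}\big(|\partial_r u|^2-t^{-2}|\partial_\theta u|^2\big)=O\big((\alpha-1)\big)\cdot(\text{lower order}).
\end{equation*}
So on the neck the radial and angular energies agree up to errors controlled by $(\alpha_k-1)$ times the energy. Combined with the small-energy decay estimates on dyadic annuli (the $\varepsilon$-regularity of Sacks--Uhlenbeck), this shows that the angular part of the energy on the neck tends to zero. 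The neck energy is therefore asymptotically its radial part, and by Pohozaev it is controlled by boundary terms.

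To compute it, I would follow Li--Wang's computation of the $\alpha$-energy on the neck, namely $(\mu^2-\mu)E[\omega]$, and its companion statement that the $\alpha$-energy on the bubble is $\mu E[\omega]$. On the bubble region, $|\nabla u_k|^2\sim r_k^{-2}$. Hence the factor $(1+|\nabla u_k|^2)^{\alpha_k-1}$ is approximately $r_k^{-2(\alpha_k-1)}\to\mu^2$. On the neck at radius $t$, the same factor is $(1+|\nabla u_k|^2)^{\alpha_k-1}\approx t^{-2(\alpha_k-1)}$ times a bounded correction tending to $1$. Rather than extracting the Dirichlet energy from the neck $\alpha$-energy, I would integrate the conserved radial quantity directly. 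Set
\begin{equation*}
\Phi(t)=\int_{\partial B_t}(1+|\nabla u|^2)^{\alpha-1}\,t\,|\partial_r u|^2 ,
\end{equation*}
which the equation shows is approximately constant in $t$ across the neck. At the inner boundary $t=Rr_k$ its value is $\approx \mu^2\cdot\frac{1}{2\pi}\cdot$(the radial flux of $\omega$ near infinity) $\cdot\,2\pi$. This flux has to be matched with $E[\omega]$, using the Li--Wang neck value $(\mu^2-\mu)E[\omega]$ to fix the normalisation.

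Consequently $\int_{\partial B_t} t|\partial_r u|^2\approx C\,t^{2(\alpha_k-1)}$, with $C$ determined by the bubble. Integrating $dt/t$ over $[Rr_k,\delta]$ gives
\begin{equation*}
\int_{A_k}|\nabla u_k|^2\approx C'\int_{\log(Rr_k)}^{\log\delta}e^{2(\alpha_k-1)s}\,ds\sim C'\cdot 2(1-\alpha_k)\log(1/r_k)\cdot(\text{normalisation}).
\end{equation*}
Since $\log\mu=\lim(1-\alpha_k)\log(1/r_k)$, this produces the $2\log\mu\,E[\omega]$ term. I expect the main obstacle to be this step: identifying the constant correctly, and controlling the error in approximating $(1+|\nabla u_k|^2)^{\alpha_k-1}$ by a power of $t$ uniformly across the whole neck. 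Here $\log|\nabla u_k|$ differs from $-\log t$ by a term that can be large where the gradient is tiny. I would handle this by splitting the neck into the region where $|\nabla u_k|\ge t^{-1}e^{-K}$ and its complement, using the exponential decay of angular energy on the complement. As a consistency check, I would verify that in the case $\mu=1$ the computation reduces to the known vanishing of the neck energy.
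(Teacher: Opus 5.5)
There is a genuine gap, and it sits exactly at the step you flag: the constant is never derived. The limiting bubble has no radial flux at infinity. For a harmonic sphere, $R\int_{\partial B_R}|\partial_r\omega|^2=O(R^{-2})$, so ``$\mu^2\times$ the flux of $\omega$'' at $t=Rr_k$ only records the bubble's tail, which contributes $O(R^{-2})$ to the neck. The flux that persists along the neck and produces a logarithm has size $\alpha_k-1$ and is invisible in $\omega$. It has to be computed from the bulk term of a Pohozaev identity. Calibrating it with Li--Wang's neck value appeals to another theorem; it is not a derivation. This computation is what the paper supplies. In Li--Wang's Pohozaev identity, $\int_{\partial B_t}(|\partial_r u_k|^2-t^{-2}|\partial_\theta u_k|^2)$ equals $-\frac{2(\alpha_k-1)}{t}$ times the scale-invariant integral $\int_{B_t}\frac{\nabla_i|\nabla u_k|^2\,\nabla_i u_k}{1+|\nabla u_k|^2}\cdot r\partial_r u_k$. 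On the bubble, conformality of $\omega$ turns this integral into $-E[\omega]$. Integrating over $t$ across the neck then gives $2(\alpha_k-1)\log(1/r_k)E[\omega]\to 2\log\mu\,E[\omega]$, and the angular energy is $O(\sqrt{\delta})$ by Lamm's estimate.

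Your remaining steps also fail. If you keep the bulk term, your weighted identity reads $\Phi(t)=\frac{2(\alpha_k-1)}{2\alpha_k-1}\int_{B_t}(1+|\nabla u_k|^2)^{\alpha_k-1}|\nabla u_k|^2$, up to angular terms and terms of order $t^2$. The right-hand side grows across the neck by the neck's own weighted energy, which is of order one precisely when $\mu>1$. So $\Phi$ is not approximately constant in the only case that matters, and relative errors of order one cannot be dropped when you integrate over a log-length of order $(\alpha_k-1)^{-1}$.

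Your final asymptotics is also wrong. In fact $\int_{\log(Rr_k)}^{\log\delta}e^{2(\alpha_k-1)s}\,ds=\frac{\delta^{2(\alpha_k-1)}-(Rr_k)^{2(\alpha_k-1)}}{2(\alpha_k-1)}\sim\frac{1-\mu^{-2}}{2(\alpha_k-1)}$. Moreover $\log\mu=\lim(\alpha_k-1)\log(1/r_k)$, not $(1-\alpha_k)\log(1/r_k)$. With the persistent inner value $\Phi\approx 2(\alpha_k-1)\mu^2E[\omega]$, your frozen-$\Phi$ ansatz gives $(\mu^2-1)E[\omega]$. So $2\log\mu\,E[\omega]$ is reached by fiat.

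Be aware that the paper's step ``using the same approach as the $\varepsilon$ case'' has the same weakness. Bubble-domination of the bulk integral was justified in \eqref{eq: biharmonic on annulus} by $|x|^{-4}$ decay. Here the integrand is only $O(\delta|x|^{-2})$. On a nearly radial neck it is $r\partial_r|\partial_r u_k|^2$ to leading order, and its integral over $B_t\setminus B_{Rr_k}$ is about $-2$ times the neck Dirichlet energy inside $B_t$. Keeping this term gives $\frac{d\Psi}{d\log t}\approx 4(\alpha_k-1)\Psi$ for $\Psi(t)=t\int_{\partial B_t}|\partial_r u_k|^2$. Hence the neck energy is $\frac{\mu^4-1}{2}E[\omega]$, which is also what your weighted identity yields once $\Phi$ is not frozen. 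This agrees with $2\log\mu\,E[\omega]$ only to first order at $\mu=1$. What survives along either route is that the neck carries no Dirichlet energy if and only if $\mu=1$.
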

This means that Dirichlet energy is conserved if and only if $\alpha$-energy is conserved. Indeed showing that the Dirichlet energy on the neck converges to 0 is sufficient for proving the energy identity. In particular this would allow us to automatically improved the result in \cite{li_zhu} and \cite{bayer_roberts} from a Dirichlet energy identity to an $\alpha$-energy identity. Though in these cases the no-neck property holds which also immediately implies the $\alpha$-energy identity.

\vspace{5mm}
This now leads us to consider the lengths of necks. Li and Wang show that there exists a special ratio that determines the length of the neck and that when it exists it contains a geodesic. We find the same result for $\varepsilon$-harmonic maps.
\begin{theorem}\label{theorem: neck result}
    Under the set-up of Theorem \ref{theorem: generalised EI epsilon} further assume that there is exactly one point of energy concentration with exactly one bubble forming $\omega\in C^\infty(\mathbb{S}^2,N)$ at rate $r_k$.
    By setting $\nu=\lim_k\sqrt{\frac{\varepsilon_k}{r_k^2}}\log(1/r_k)$ we find that there are three possible scenarios for the length of the neck.
    \begin{enumerate}
        \item If $\nu=0$ there is no neck. Explicitly
        \begin{equation*}
            \lim_{R\rightarrow\infty}\lim_{R_0\rightarrow0}\lim_{k\rightarrow\infty}\textnormal{osc}_{B_{R_0}(0)\setminus B_{Rr_k}(0) }(u_k\circ F^{-1})=0.
        \end{equation*}
        \item If $\nu\in(1,\infty)$ then the neck region consists of exactly a geodesic of length
        \begin{equation*}
            \nu\cdot\sqrt{\frac{e^{-2\rho(0)}}{\pi}\int_{\mathbb{R}^2}|\Delta \omega|^2dx}.
        \end{equation*}
        Explicitly we define the family of curves $\gamma_k:(0,1)\rightarrow \mathbb{R}^l$ by
        \begin{equation*}
            \gamma_k(t)=\frac{1}{2\pi r_k^t}\int_{\partial B_{r_k^t}(0)}u_k\circ F^{-1}.
        \end{equation*}
        Then $\gamma_k\rightarrow\Gamma$ in $C^0_{\text{loc}}((0,1))$ where $\Gamma\in C^\infty((0,1), N)$ is a geodesic of arc length, $\Gamma'(t)=\nu\cdot\sqrt{\frac{e^{-2\rho(0)}}{\pi}\int_{\mathbb{R}^2}|\Delta \omega|^2dx}$. Further, to see that $\Gamma$ is the whole neck,
        \begin{align*}
            \lim_{\alpha\rightarrow0}\lim_{R_0\rightarrow0}\lim_{k\rightarrow\infty}\textnormal{osc}_{B_{R_0}(0)\setminus B_{r_k^\alpha}(0) }(u_k\circ F^{-1})=&0,\\
            \lim_{\beta\rightarrow1}\lim_{R\rightarrow\infty}\lim_{k\rightarrow\infty}\textnormal{osc}_{B_{r_k^\beta}(0)\setminus B_{Rr_k}(0) }(u_k\circ F^{-1})=&0.
        \end{align*}
        \item If $\nu=\infty$ then the neck contains a geodesic of infinite length. $\gamma_k$, when reparametrised by arc length, converges locally uniformly to a geodesic of infinite length.
    \end{enumerate}
\end{theorem}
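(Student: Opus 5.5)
We pass to cylindrical coordinates on the neck and study the angular average $u^*_k(t)$ of $u_k\circ F^{-1}$, where $|x|=e^{-t}$ and $t\in(\log(1/R_0),\log(1/(Rr_k)))$. The neck then has length $T_k\approx\log(1/r_k)$. After rescaling $t=s\log(1/r_k)$, the curve $\gamma_k(s)=u^*_k(s\log(1/r_k))$ is exactly the curve in Theorem \ref{theorem: neck result}.

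\textbf{Step 1: a first integral.} Multiplying the $\varepsilon$-harmonic equation by $x\cdot\nabla u_k$ (a Pohozaev-type identity) gives, on circles, a conserved quantity up to controllable errors. Setting $\tau=\varepsilon_k e^{2t}$ (the weight $\varepsilon_k/|x|^2$), it has the form
\begin{equation*}
\int_{\partial B_{e^{-t}}}\big(|\partial_t u_k|^2-|\partial_\theta u_k|^2\big)+\tau\,(\text{fourth order terms})=\text{const}+o(1).
\end{equation*}
The constant is read off at the bubble end $|x|=Rr_k$. There the Dirichlet part is $o(1)$, since the Hopf differential of the harmonic bubble vanishes, while the biharmonic part contributes $2\frac{\varepsilon_k}{r_k^2}e^{-2\rho(0)}\int_{\mathbb{R}^2}|\Delta\omega|^2$ in suitably normalised units. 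This is the same quantity that produces the defect term in Theorem \ref{theorem: generalised EI epsilon}, so I would reuse that computation.

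\textbf{Step 2: reduction to the radial part.} Using the standard neck decay estimates ($\varepsilon$-regularity, and the three-circle/Hardy-type decay of the angular part), the angular energy $\int|\partial_\theta u_k|^2$ and the higher order terms are negligible in the interior of the neck. Hence $|\partial_t u^*_k|^2\approx c_k:=\frac{\varepsilon_k}{\pi r_k^2}e^{-2\rho(0)}\int|\Delta\omega|^2$. Rescaling to $s$ gives
\begin{equation*}
|\gamma_k'(s)|^2\approx c_k\log^2(1/r_k)\to\nu^2\frac{e^{-2\rho(0)}}{\pi}\int|\Delta\omega|^2 .
\end{equation*}
This is consistent with Theorem \ref{theorem: generalised EI epsilon}, since the energy on the neck is $\int|\partial_t u|^2dt\approx c_kT_k$. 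Note that $\nu=0$ forces $\mu=0$, and $\nu\in(0,\infty)$ also forces $\mu=0$, since $\mu=\nu^2/\log(1/r_k)$.

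\textbf{Step 3: the geodesic equation.} Averaging the equation over circles, the tangential part of $\partial_t^2u^*_k$ is small, so $\gamma_k''\perp T_{\gamma_k}N$ up to $o(1)$. We have uniform Lipschitz bounds from Step 2 and smallness of oscillation on each circle. Arzelà–Ascoli then gives a $C^0_{\text{loc}}((0,1))$ limit $\Gamma$ with constant speed that weakly solves the geodesic equation, hence is a smooth geodesic. The cases then follow:
\begin{itemize}
\item For $\nu=0$, the speed tends to $0$. Integrating the oscillation estimate over the neck, including the end pieces $|x|\in(r_k^\beta,Rr_k)$ and $(R_0,r_k^\alpha)$, gives no neck.
\item For $\nu$ finite and positive, the two limit statements for the end pieces come from the same speed bound: the image of $[\beta,1]$ and $[0,\alpha]$ has length $O(\nu(1-\beta))$ and $O(\nu\alpha)$.
\item For $\nu=\infty$, we reparametrise by arc length and apply the same compactness argument.
\end{itemize}

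\textbf{Main obstacle.} The hardest step is Step 2, showing that the angular and fourth-order errors are $o(c_k)$ uniformly along the whole neck, rather than merely $o(1)$, when $c_k$ itself tends to $0$. I would first prove exponential decay in $t$ of the non-radial Fourier modes from both ends, using the linearised fourth-order operator $\tau\partial^4+\partial^2$. I would then treat the zero mode exactly, using the $\tau$-dependent ODE $\tau(\partial_t^2-2\partial_t)^2$-type structure to check that the biharmonic flux transfers into $|\partial_t u^*|^2$ at the correct rate.
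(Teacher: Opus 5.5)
Your architecture is the right one, and your speed constant is correct. With $c_k=\frac{\varepsilon_k}{\pi r_k^2}e^{-2\rho(0)}\int_{\mathbb{R}^2}|\Delta\omega|^2$ and $\nu_k=\sqrt{\varepsilon_k/r_k^2}\log(1/r_k)$, one has $c_k\log^2(1/r_k)=\nu_k^2\frac{e^{-2\rho(0)}}{\pi}\int_{\mathbb{R}^2}|\Delta\omega|^2$. Your route (Pohozaev flux, reduction to $u_k^*$, averaged equation, compactness) is essentially the Li--Wang route, which the paper only cites for $\nu>0$. But cases 2 and 3 are not proved: everything hinges on Step 2, which you assert and postpone. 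The Pohozaev identity only controls the difference, $\int_0^{2\pi}(|\partial_t u_k|^2-|\partial_\theta u_k|^2)\,d\theta=2\pi c_k(1+o(1))$. So getting $|\partial_t u_k^*|^2=c_k(1+o(1))$ requires $\int_0^{2\pi}(|\partial_\theta u_k|^2+|\partial_t(u_k-u_k^*)|^2)\,d\theta=o(c_k)=o(\log^{-2}(1/r_k))$, uniformly on the circles $|x|=r_k^s$ with $s$ in compact subsets of $(0,1)$. The only decay estimate the paper proves is $f_k(1)\le C(2^{-t/2}f_k(t)+\varepsilon_k/r_k^2)$. It bounds the \emph{total} energy of dyadic annuli, with an additive error of the same order as the radial energy you want to identify, so it cannot deliver this. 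For $\nu<\infty$ it does give the rest: $|x||\nabla u_k|\le C\sqrt{\varepsilon_k/r_k^2}$ in the interior, hence $C^{1,1}_{\mathrm{loc}}$ bounds on $\gamma_k$. The tangential part of $\gamma_k''$ is then $O(\nu_k^2\,\mathrm{osc}+\nu_k^2r_k^{2(1-s)})\to0$, so the limit is a geodesic. What that estimate cannot give is the value of the speed (a priori it could even be zero), nor the lower speed bound that case 3 needs for the arc-length reparametrisation. The missing ingredient is a decay estimate for the non-radial part alone with no additive source, e.g. a three-circle inequality for $\Theta(t)=\int_0^{2\pi}|\partial_\theta u_k|^2\,d\theta$. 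For that you must check that every non-radial forcing term is at least linear in $u_k-u_k^*$. This includes the fourth-order term and the coupling through the non-radial factor $e^{-2\rho}$ in $\varepsilon_k\Delta(e^{-2\rho}\Delta u_k)$. Your Fourier-mode plan aims at exactly this but does not carry it out.

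For case 1 and for the two end-piece limits in case 2, none of this is needed. Your justification there is also off: the end pieces do not follow from ``the same speed bound''. That bound holds only in the interior, and near $|x|=Rr_k$ and $|x|=R_0$ the tails carry energy of order $\delta\gg c_k$. The paper's proof never identifies a speed. It tests the Euler--Lagrange equation with $u_k-u_k^*$, uses Poincar\'e on circles for the boundary term, pointwise smallness for the second fundamental form term, and Pohozaev to bound radial energy by half the total. This yields $(\frac12-\gamma)f_k\le\frac{1}{2\log 2}f_k'+C\frac{\varepsilon_k}{r_k^2}(1+t)$. Hence each dyadic annulus has oscillation at most $C2^{-L_l/4}\sqrt{E_{\mathrm{neck}}}+C\sqrt{\varepsilon_k/r_k^2}$, where $E_{\mathrm{neck}}=E[u_k;A(Rr_k,R_0)]$ and $L_l$ is the dyadic distance to the nearer end. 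Summing over all roughly $\log(1/r_k)$ annuli gives $C\sqrt{E_{\mathrm{neck}}}+C\nu_k$. Summing only over those in $B_{r_k^\beta}\setminus B_{Rr_k}$, respectively $B_{R_0}\setminus B_{r_k^\alpha}$, gives $C\sqrt{E_{\mathrm{neck}}}+C(1-\beta)\nu_k$, respectively $C\sqrt{E_{\mathrm{neck}}}+C\alpha\nu_k$. Here $E_{\mathrm{neck}}\to0$ because $\nu<\infty$ forces $\mu=0$.

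Two smaller corrections. In Step 1, at $|x|=Rr_k$ the fourth-order boundary terms are $o(\varepsilon_k/r_k^2)$ as $R\to\infty$, while the Dirichlet part of the flux there is the whole $2\pi c_k$. So ``$o(1)$'' is the wrong scale when everything is $O(\varepsilon_k/r_k^2)$. The constant comes from the bulk term $\frac{2\varepsilon_k}{t}\int_{B_t}e^{-2\rho}|\Delta u_k|^2$, which is dominated by the bubble region. In Step 3, Arzel\`a--Ascoli with Lipschitz bounds only yields $|\Gamma'|\le\lim|\gamma_k'|$. Transferring the speed to $\Gamma$ needs the $C^1_{\mathrm{loc}}$ convergence that the $C^{1,1}$ bound provides.
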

Note that the $\nu<\infty$ implies that $\mu=0$, so in the both the first two cases we have an exact $L^\infty$ and $W^{1,2}$ picture of our singularity model.
Following the work of Li and Wang \cite{Li_Wang_energy_counterexample}, Bayer and the author showed \cite{bayer_roberts} that there does indeed exist an example of a target manifold where the full energy identity can fail and there may exist a neck. 

\begin{center}
    Acknowledgements
\end{center}
The author would like to thank his supervisor Ben Sharp for many helpful discussions. The author is funded by Engineering and Physical Sciences Research Council (EPSRC) - EP/W524372/1, Studentship 2927009.

\section{Proof of the energy identity}\label{section: Proof of the energy identity}
We first recall the low energy regularity theorem proven by Lamm.
\begin{lemma}\label{lemma: regularity theorem}[\cite{Lamm_epsilon} Corollary 2.10]
    There exists $\delta_0>0$ and $C>0$ such that if $u_\varepsilon\in C^\infty(M,N)$ is $\varepsilon$-harmonic with $E_\varepsilon(u_\varepsilon,B_{32R}(x_0))<\delta_0$ for some $x\in\Sigma$ and $R>0$. Then for all $\varepsilon>0$ sufficiently small and any $k\in \mathbb{N}$
    \begin{equation*}
        \sum_{i=1}^kR^i\|\nabla^iu_\varepsilon\|_{L^\infty(B_R(x_0))}\leq C\sqrt{E_\varepsilon(u_\varepsilon,B_{32R}(x_0))}.
    \end{equation*}
\end{lemma}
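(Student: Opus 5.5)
The plan is to reduce to a unit-scale statement by scaling, establish a Morrey-type energy decay that is uniform in the (rescaled) parameter, and then bootstrap to all derivatives. Set $v(y)=u_\varepsilon(x_0+Ry)$ in local coordinates. The Dirichlet part of the energy is conformally invariant in dimension two, and $\varepsilon\int|\Delta u_\varepsilon|^2$ becomes $\varepsilon'\int|\Delta v|^2$ with $\varepsilon'=\varepsilon/R^2$. Hence $v$ is $\varepsilon'$-harmonic on $B_{32}$, with metric close to Euclidean for $R$ small, and satisfies $E_{\varepsilon'}(v,B_{32})<\delta_0$. The claimed inequality is exactly $\sum_{i\le k}\|\nabla^i v\|_{L^\infty(B_1)}\le C\sqrt{E_{\varepsilon'}(v,B_{32})}$. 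So it suffices to prove this at unit scale with $C$ \emph{independent of} $\varepsilon'\in(0,\infty)$. Since $R$ is arbitrary, $\varepsilon'$ ranges over all positive values even when $\varepsilon$ is small.

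\textbf{Euler--Lagrange structure.} Writing $N\hookrightarrow\mathbb{R}^l$ with nearest-point projection $P$, the Euler--Lagrange equation has the form
\begin{equation*}
\varepsilon'\Delta^2 v-\Delta v=\varepsilon'\,\mathcal{N}_4(v)+\mathcal{N}_2(v).
\end{equation*}
Here $\mathcal{N}_2=A(v)(\nabla v,\nabla v)$ is the usual harmonic map nonlinearity. The term $\mathcal{N}_4$ collects terms of the form $\nabla^3 v\ast\nabla v$, $\nabla^2v\ast\nabla^2v$, $\nabla^2 v\ast\nabla v\ast\nabla v$ and $(\nabla v)^{\ast4}$, with coefficients smooth in $v$. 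I would rewrite this in divergence form, following Lamm, so that the right-hand side is controlled by energy quantities. The natural quantity is $\int|\nabla v|^2+\varepsilon'|\nabla^2 v|^2$, and the $\varepsilon'$-weighted terms should be estimated by Gagliardo--Nirenberg against it.

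\textbf{Uniform decay estimate.} The linear operator factors as $L_{\varepsilon'}=-\Delta(1-\varepsilon'\Delta)$. On a ball $B_\rho$ I would compare $v$ with the solution $h$ of $L_{\varepsilon'}h=0$ having the same boundary data (or use a cutoff extension). For $h$ one proves, uniformly in $\varepsilon'$, the Campanato decay
\begin{equation*}
\int_{B_r}|\nabla h|^2+\varepsilon'|\nabla^2 h|^2\le C\Big(\frac r\rho\Big)^2\int_{B_\rho}|\nabla h|^2+\varepsilon'|\nabla^2 h|^2 .
\end{equation*}
This follows by splitting $h=h_1+h_2$, where $h_1$ is harmonic and $h_2$ solves $(1-\varepsilon'\Delta)h_2=\text{harmonic}$. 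The resolvent $(1-\varepsilon'\Delta)^{-1}$ has uniform $L^p$ and Hölder bounds independent of $\varepsilon'$. The difference $v-h$ is bounded by the nonlinearity, which is at least quadratic, hence by $C\sqrt{\delta_0}\,E_{\varepsilon'}(v,B_\rho)$. Choosing $\delta_0$ small and iterating then gives $E_{\varepsilon'}(v,B_r(y))\le Cr^{2\alpha}E_{\varepsilon'}(v,B_{32})$ for $y\in B_8$. By Morrey's lemma this yields $C^{0,\alpha}$ control and $L^p$ bounds on $\nabla v$ for some $p>2$.

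\textbf{Bootstrap and main obstacle.} Differentiating the equation and reapplying the uniform linear estimates for $L_{\varepsilon'}$ on shrinking balls $B_{8}\supset B_4\supset\cdots\supset B_1$ upgrades this to $L^\infty$ bounds on $\nabla^i v$ for each $i\le k$. Each bound is linear in $\sqrt{E_{\varepsilon'}}$ because the nonlinearities are at least quadratic and $\delta_0$ is small. The main obstacle is uniformity in $\varepsilon'$ across both regimes. When $\varepsilon'\to0$ the problem is a singular perturbation of the harmonic map equation, with potential boundary layers of width $\sqrt{\varepsilon'}$. When $\varepsilon'\to\infty$ it behaves like the biharmonic map equation, where only $\varepsilon'|\nabla^2v|^2$ is controlled well. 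I would first try to handle both regimes simultaneously with interior estimates for $(1-\varepsilon'\Delta)^{-1}$ via the Bessel-kernel representation, which gives bounds independent of $\varepsilon'$. Failing that, I would split into the cases $\varepsilon'\le1$ and $\varepsilon'\ge1$ and treat each by a perturbation of the harmonic or biharmonic theory respectively.
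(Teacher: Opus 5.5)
The paper does not prove this lemma; it quotes it from Lamm's Corollary~2.10. Judged on its own terms, your outline has a genuine gap at the decay step. You assert that $v-h$ is controlled by $C\sqrt{\delta_0}\,E_{\varepsilon'}(v,B_\rho)$ because the nonlinearity is at least quadratic. Testing $L_{\varepsilon'}(v-h)=\varepsilon'\mathcal{N}_4+\mathcal{N}_2$ with $w=v-h\in W^{2,2}_0(B_\rho)$ produces $\int\mathcal{N}_2\cdot w$. The energy only places $\mathcal{N}_2=A(v)(\nabla v,\nabla v)$ in $L^1$, so you need $\|w\|_{L^\infty}^2\le C\int_{B_\rho}|\nabla w|^2+\varepsilon'|\nabla^2 w|^2$ with $C$ independent of $\varepsilon'$. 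Agmon's inequality $\|w\|_{L^\infty}^2\le C\rho\|\nabla w\|_{L^2}\|\nabla^2 w\|_{L^2}$ gives this when $\varepsilon'\ge\rho^2$. As $\varepsilon'/\rho^2\to0$ it fails: a suitably smoothed and cut-off version of $\log(\rho/\max\{|x|,\sqrt{\varepsilon'}\})$ has $\|w\|_{L^\infty}^2$ of order $\log^2(\rho/\sqrt{\varepsilon'})$ but energy only of order $\log(\rho/\sqrt{\varepsilon'})$. The same problem recurs in $\varepsilon'\mathcal{N}_4$, e.g.\ $\varepsilon'\int\nabla^2v\ast\nabla^2v\ast w$. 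Moreover, $L_{\varepsilon'}$ is fourth order and has no maximum principle that would bound $w$ by $\mathrm{osc}\,v$. The real obstacle is this criticality of the harmonic-map term in dimension two, not boundary layers, and quadratic growth alone cannot overcome it. In the limiting case $\varepsilon'=0$ your step applies verbatim to $-\Delta u=|\nabla u|^2$. There $u=\log\log(1/|x|)$ is a weak solution on $B_\delta$ with energy $2\pi/\log(1/\delta)$, its harmonic replacement $h$ is constant, and $u-h$ carries all of the energy. So a Campanato comparison resting only on ``quadratic plus small energy'' cannot give a Morrey decay uniform in $\varepsilon'$. You must use either the compensation structure of $A(v)(\nabla v,\nabla v)$ (H\'elein, Rivi\`ere) or the assumed smoothness of $u_\varepsilon$.

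The natural repair uses smoothness through an a priori absorption argument of Sacks--Uhlenbeck type instead of a Campanato iteration. After the substitution $\zeta\mapsto\zeta/\sqrt{\varepsilon'}$, the symbols of $\nabla^2$, $\sqrt{\varepsilon'}\nabla^3$ and $\varepsilon'\nabla^4$ composed with $(\varepsilon'\Delta^2-\Delta)^{-1}$ are $\varepsilon'$-independent Mikhlin multipliers. Hence for compactly supported $\xi$ and $1<p<\infty$ one has $\|\nabla^2\xi\|_{L^p}+\sqrt{\varepsilon'}\|\nabla^3\xi\|_{L^p}+\varepsilon'\|\nabla^4\xi\|_{L^p}\le C_p\|(\varepsilon'\Delta^2-\Delta)\xi\|_{L^p}$ with $C_p$ independent of $\varepsilon'$; this is the uniform linear estimate you actually need. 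Apply it to $\xi=\eta(v-\bar v)$ with $1<p<2$ and $p^\ast=2p/(2-p)$. By H\"older and Sobolev, $\||\nabla v|^2\|_{L^p}\le\|\nabla v\|_{L^2}\|\nabla v\|_{L^{p^\ast}}$, $\varepsilon'\|\nabla^3v\ast\nabla v\|_{L^p}\le C\|\nabla v\|_{L^2}\,\varepsilon'\|\nabla^3v\|_{L^{p^\ast}}$, and $\varepsilon'\||\nabla^2v|^2\|_{L^p}\le C\sqrt{\varepsilon'}\|\nabla^2v\|_{L^2}\,\sqrt{\varepsilon'}\|\nabla^2v\|_{L^{p^\ast}}$; the cubic and quartic terms are handled similarly by Gagliardo--Nirenberg. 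Thus every nonlinear term is $C\sqrt{E}$ times the left-hand norm and can be absorbed, legitimately since that norm is finite for smooth $v$, with cut-off errors controlled on nested balls. For $\varepsilon'\ge1$, use the same inequality divided by $\varepsilon'$ and absorb into $(\varepsilon')^{-1}\|\nabla^2\xi\|_{L^p}+(\varepsilon')^{-1/2}\|\nabla^3\xi\|_{L^p}+\|\nabla^4\xi\|_{L^p}$. Without this renormalisation the cut-off term $\varepsilon'\nabla^4\eta\,(v-\bar v)$ is of size $\varepsilon'\sqrt{E}$. This yields $\|\nabla v\|_{L^q}\le C_q\sqrt{E}$ on a smaller ball for every $q<\infty$, uniformly in $\varepsilon'$, and from there your differentiate-and-bootstrap step goes through.
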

We will now prove the generalised energy identity for $\varepsilon_k$-harmonic maps. We will prove this without assuming that $\frac{\varepsilon_k}{r_k^2}\rightarrow0$ and that $\omega$ is harmonic. Meaning that this proof can serve as an alternative proof of these facts. Indeed we will only assume that $\omega$ is a smooth, non trivial map $\mathbb{R}^2\rightarrow N$ as in Lamm's construction of the bubbling procedure in \cite{Lamm_epsilon}.
\begin{proof}[Proof of Theorem \ref{theorem: generalised EI epsilon}]
    We shall start by restricting to the case of only bubble. By using a standard covering argument it is easy to show this extends to the general case.
    Using local smooth convergence we immediately have
    \begin{equation*}
        \lim_{R\rightarrow0}\lim_{R_0\rightarrow\infty}\lim_{k\rightarrow\infty}E_{\varepsilon_k}[u_k;\{M\setminus{B_R(x)}\}\cup B_{R_0r_k}(x)]=E[u_\infty]+E[\omega]+\lim_{k\rightarrow\infty}(\frac{\varepsilon_k}{r_k^2})e^{2\rho(0)}\int_{\mathbb{R}^2}|\Delta \omega|^2dx
    \end{equation*}
    It remains to find the energy on the annulus.
    We first find the Hopf-differential type estimate akin to Lamm (Lemma 3.2 \cite{Lamm_minmax}).
    We first note the stress-energy tensor for the harmonic map and the stress-energy tensor biharmonic map constructed by Jiang \cite{Jiang}.
    \begin{align*}
        S^1_{\alpha\beta}(u):=&\frac{1}{2}|\nabla^gu|^2\delta_{\alpha\beta}-\langle\nabla^g_\alpha u,\nabla^g_\beta u\rangle,\\
        S^2_{\alpha\beta}(u):=&\frac{1}{2}|\Delta_gu|^2\delta_{\alpha\beta}+\langle\nabla^g_\gamma u,\nabla^g_\gamma\Delta_g u\rangle\delta_{\alpha\beta}-\langle\nabla^g_\alpha u,\nabla^g_\beta\Delta_g u\rangle-\langle\nabla^g_\beta u,\nabla^g_\alpha\Delta_g u\rangle.
    \end{align*}
    By taking the divergence we see that if $u$ is a $\varepsilon$-harmonic map then
    \begin{align*}
       \text{div}^g_\alpha( S^1_{\alpha\beta}(u)-\varepsilon S^2_{\alpha\beta}(u))=-\langle\nabla^g_\beta u,\Delta_gu-\varepsilon\Delta_g^2u\rangle=0.
    \end{align*}
    We can now differentiate by parts in a ball around 0 to get
    \begin{align*}
        \int_{B_r(0)}S^1_{\alpha\beta}(u)-\varepsilon S^2_{\alpha\beta}(u)\delta_{\alpha\beta}dx_g=\int_{\partial B_r(0)}(S^1_{\alpha\beta}(u)-\varepsilon S^2_{\alpha\beta}(u))x^\beta\nu_g^\alpha dS_g
    \end{align*}
    where $\nu_g$ is the outward unit normal with respect to the metric. Expanding out the terms this becomes
    \begin{equation}      
    \begin{split}\label{eq: E-L 1}
        \int_{\partial B_t}|\frac{\partial}{\partial r}u|^2-\frac{1}{t^2}|\frac{\partial}{\partial\theta}u|^2dS=& \frac{2\varepsilon}{t}\int_{B_t}e^{-2\rho}|\Delta u|^2dx 
       -2\varepsilon \int_{\partial B_t}\frac{e^{-2\rho}}{2}|\Delta u|^2\\&+\langle \nabla_i u,\nabla_i (e^{-2\rho}\Delta u)\rangle
       -2\langle \frac{\partial }{\partial r}u,\frac{\partial}{\partial r}(e^{-2\rho}\Delta u)\rangle dS.
    \end{split}
    \end{equation}
    We now may use the standard trick that given any $\delta>0$ we can find $R,K$ large enough and $R_0$ small enough such that $E_{\varepsilon_k}[u_k;B_{2a}\setminus B_a]<\delta$ for any $Rr_k<a<R_0/2$ and all $k\geq K$. This follows from the assumption that there is only one bubble.
    Now using Lemma \ref{lemma: regularity theorem} we have
    \begin{equation}\label{eq: pointwise small on annulus}
        |\nabla^t u_k|\leq C\sqrt{\delta}\frac{1}{|x|^t}
    \end{equation}
    for $t\in\mathbb{N}$ and $x\in A(2Rr_k,R_0/4):=B_{R_0/4}\setminus B_{2Rr_k}$.
    Then by integrating \eqref{eq: E-L 1} and using \eqref{eq: pointwise small on annulus} we obtain an equation for energy on the annulus.
    \begin{align}\nonumber
        |E_{\varepsilon_k}[u_{k};A(2Rr_k,R_0/4)]-2\varepsilon_k\int_{Rr_k}^{R_0/4}(\frac{1}{r}\int_{B_r}e^{-2\rho}|\Delta u_k|^2dx)dr|
        \\\leq C\int_{A(2Rr_k,R_0/4)}\frac{1}{r^2}|\frac{\partial}{\partial \theta}u_k|^2dx+C\varepsilon_k\int_{A(2Rr_k,R_0/4)}|\Delta u_k|^2+|\nabla u_k|(|\nabla^3u_k|+|\nabla^2u_k|)dx\label{eq: sharp inital bound on whole annulus} \\\nonumber \leq C(\frac{\varepsilon_k}{r_k^2}+\sqrt{\delta}).
    \end{align}
    Where we have used Lemma 3.3 in \cite{Lamm_minmax} to get the following bound on the tangential derivative
    \begin{equation*}
        \int_{A(2Rr_k,R_0/4)}\frac{1}{r^2}|\frac{\partial}{\partial \theta}u_k|^2dx\leq C(\frac{\varepsilon_k}{r_k^2}+\sqrt{\delta}).
    \end{equation*}
    Note that this constant will be independent of $R$ and $R_0$ as increasing $R$ and decreasing $R_0$ decreases the size of the annulus.
    We then note that
    \begin{equation}\label{eq: biharmonic on ball}
        r_k^2\int_{B_{Rr_k}}e^{-2\rho}|\Delta u_k|^2dx\rightarrow e^{-2\rho(0)}\int_{B_R}|\Delta \omega|^2dx
    \end{equation}
    and for $Rr_k<r<R_0/4$ we have, using \eqref{eq: pointwise small on annulus},
    \begin{equation}\label{eq: biharmonic on annulus}
        \int_{B_r\setminus B_{Rr_k}}e^{-2\rho}|\Delta u_k|^2dx\leq C\sqrt{\delta}\int_{\mathbb{R}^2\setminus B_{Rr_k}}\frac{1}{|x|^4}dx\leq C\sqrt{\delta}\frac{1}{r_k^2}.
    \end{equation}
    So we have 
    \begin{equation}\label{eq: energy full annulus, nearly done}
    \begin{split}
        E_{\varepsilon_k}[u_{k};A(2Rr_k,R_0/4)]-{\frac{\varepsilon_k}{r_k^{2}}}\log (1/r_k)(2e^{-2\rho(0)}\int_{B_R}|\Delta \omega|^2dx+o_k(1)+\mathcal{O}_k(\sqrt{\delta}))\\=
        \mathcal{O}_k(\frac{\varepsilon_k}{r_k^2})(1+\int_{B_R}|\Delta \omega|^2dx)
        +\mathcal{O}_k(\sqrt{\delta}).
    \end{split}
    \end{equation}
    We know that $\int_{\mathbb{R}^2}|\Delta \omega|^2>0$ as else $\omega$ would be a harmonic function from $\mathbb{R}^2$ into a compact subset of $\mathbb{R}^l$ so constant, but $\omega$ is non-constant by definition. Now by taking $k$ large enough, $R$ large enough and $R_0$ small enough we may ensure that
    \begin{equation*}
        2e^{-2\rho(0)}\int_{B_R}|\Delta \omega|^2dx+o_k(1)+\mathcal{O}_k(\sqrt{\delta})\geq C>0.
    \end{equation*}
    Our uniform bound on energy now entails that ${\frac{\varepsilon_k}{r_k^{2}}}\log (1/r_k)$ must be uniformly bounded and so ${\frac{\varepsilon_k}{r_k^{2}}}\rightarrow0$.
    By examining the Euler-Lagrange equations we now see that $\omega$ must be harmonic. $\omega$ has finite energy so we may extend over the singularity at $\infty$. This means $\omega$ is a map from the sphere so $\int_{\mathbb{R}^2}|\Delta \omega|^2dx$ must be finite.
    Now taking $k$ to infinity gives
    \begin{equation}\label{eq: energy on full annulus final bound}
        E_{\varepsilon_k}[u_{k};A(2Rr_k,R_0/4)]-\lim_{k\rightarrow\infty}(\log (1/r_k){\frac{\varepsilon_k}{r_k^{2}}})(2e^{-2\rho(0)}\int_{B_R}|\Delta \omega|^2dx+\mathcal{O}(\sqrt{\delta}))=\mathcal{O}(\sqrt{\delta}).
    \end{equation}
    By taking $R\rightarrow\infty$, $R_0\rightarrow0$ we may send $\delta$ to 0 completing the proof.
\end{proof}

The proof of the Dirichlet energy identity for $\alpha$-harmonic maps is essentially identical.
\begin{proof}[Proof of Theorem \ref{theorem: generalised EI dirichlet alpha}]    
The Pohozaev identity for $\alpha$-harmonic maps (Lemma 2.4 \cite{li_wang_general_EI}) tells us that
\begin{align*}
    \int_{A(2Rr_k,R_0/4)}|\nabla u|^2dx=&-2(\alpha -1)\int_{2Rr_k}^{R_0/4}\frac{1}{t}\int_{B_t}\frac{\nabla_i|\nabla u_k|^2\nabla_i u_k}{1+|\nabla u_k|^2}r\frac{\partial u_k}{\partial r}dxdt\\&+2\int_{A(2Rr_k,R_0/4)}\frac{1}{r^2}|\frac{\partial u_k}{\partial \theta}|^2dx.
\end{align*}
One can again show that the tangential derivative is small for $\alpha$-harmonic maps.
In particular we have (Lemma 2.5 \cite{Lamm_minmax})
\begin{equation*}
    \int_{A(2Rr_k,R_0/4)}\frac{1}{r^2}|\frac{\partial u_k}{\partial \theta}|^2dx\leq C\sqrt{\delta}(1+(\alpha_k-1)\log (1/r_k))\leq C\sqrt{\delta},
\end{equation*} where $\delta$ is defined as in the $\varepsilon$ case. Using the same approach as the $\varepsilon$ case it is easy to see that
\begin{equation}
    \lim_kE[u_{k}]=E[u_\infty]+E[\omega]-2\lim_k((\alpha_k-1)\log (1/r_k))\int_{\mathbb{R}^2}\frac{\nabla_i|\nabla \omega|^2\nabla_i \omega}{|\nabla \omega|^2}r\frac{\partial \omega}{\partial r}dx.
\end{equation}
We note that $\omega$ is harmonic and in particular conformal, so we find
\begin{align*}
    \int_{\mathbb{R}^2}\frac{\nabla_i|\nabla \omega|^2\nabla_i \omega}{|\nabla \omega|^2}r\frac{\partial \omega}{\partial r}dx
    =-E[\omega].
\end{align*}
\end{proof}

\section{Proof of the neck result}
We will follow the approach of Li and Wang to show that $\nu=0$ implies the no neck property.
The proof for the $\nu>0$ case similarly follows from Li and Wang's proof by making the same changes as below.
\begin{proof}[Proof Theorem \ref{theorem: neck result}]

Note here the Euler-Lagrange equation for $\varepsilon$-harmonic maps
\begin{equation}\label{eq: full EL}
    0=(\Delta_g u-\varepsilon\Delta_g^2u)^\top=\Delta_g u-A(u)(\nabla^g u,\nabla^g u)+\varepsilon (\Delta_g^2u)^\top.
\end{equation}
We will now define $u^*_k(r)=u^*_k(|x|)$ by
\begin{equation*}
    u^*_k(r)=\frac{1}{2\pi r}\int_{\partial B_r}u_kds=\frac{1}{2\pi}\int_0^{2\pi}u_k(r,\theta)d\theta.
\end{equation*}
Now testing the Euler-Lagrange equation with $u_k-u_k^*$ we get, for any annulus $A(p,q)$,
\begin{align}\label{eq: E-L 2}
    \int_{A(p,q)}|\nabla u_k|^2dx\leq&\int_{A(p,q)}A(u_k)(\nabla u_k,\nabla u_k)\cdot(u_k-u^*_k)+\nabla u_k\cdot\nabla u^*_kdx\nonumber\\
    &+\int_{\partial A(p,q)}\frac{\partial u_k}{\partial r}(u_k-u_k^*)ds+\varepsilon_k\int_{A(p,q)}(\Delta^2_gu)^\top\cdot(u_k-u^*_k)dx_g.
\end{align}
As in \cite{li_wang_general_EI} we can bound,
\begin{equation*}
    \begin{split}
    \int_{A(p,q)}\nabla u_k\cdot\nabla u^*_kdx&=
    \int_{A(p,q)}\frac{\partial u_k}{\partial r}\cdot\frac{\partial u_k^*}{\partial r}\\
    &\leq(\int_{A(p,q)}\vert\frac{\partial u_k}{\partial r}\vert^2dx\cdot\int_{A(p,q)}\vert\frac{\partial u_k^*}{\partial r}\vert^2dx)^{1/2}\\
    &\leq\int_{A(p,q)}\vert\frac{\partial u_k}{\partial r}\vert^2dx.
    \end{split}
\end{equation*}
Then by a Poincar\'e inequality we have
\begin{equation*}
    \int_{\partial B_t}|u_k-u_k^*|^2\leq \int_{\partial B_t}|\frac{\partial u_k}{\partial\theta}|^2,
\end{equation*}
giving
\begin{equation*}
\begin{split}
    |\int_{\partial B_t}\frac{\partial u_k}{\partial r}(u_k-u^*_k)ds|\leq&(\int_{\partial B_t}|\frac{\partial u_k}{\partial r}|^2\int_{\partial B_t}|\frac{\partial u_k}{\partial\theta}|^2)^{1/2}\\
    &\leq\frac{1}{2}\int_{\partial B_t}t|\frac{\partial u_k}{\partial r}|^2+\frac{1}{t}|\frac{\partial u_k}{\partial \theta}|^2\\
    &=\frac{1}{2}\int_{\partial B_t}t|\nabla u_k|^2ds .
\end{split}
\end{equation*}
We can use \eqref{eq: pointwise small on annulus} to ensure that $u_k-u^*_k$ is pointwise small on our annulus so given $\gamma>0$ we have
\begin{equation}\label{eq: 2FF bounded by gamma}
    \int_{A(p,q)}A(u_k)(\nabla u_k,\nabla u_k)(u_k-u^*_k)dx\leq \gamma\int_{A(p,q)}|\nabla u_k|^2dx
\end{equation}
for $R_0$ large enough, $R$ small enough, $k$ large enough and any $R_0r_k<p<q<R$.
Now by integrating $\eqref{eq: E-L 1}$ we get
\begin{equation}\label{eq: better r bound}
    \int_{A(p,q)}|\frac{\partial}{\partial r}u|^2\leq\frac{1}{2}\int_{A(p,q)}|\nabla u_k|^2+C\varepsilon_k\Big(\int_{p}^q\frac{1}{r}\int_{B_r}|\Delta_g u|^2dx_gdr+\int_{A(p,q)}|\nabla u_k|(|\nabla^3u_k|+|\nabla^2u_k|)\Big).
\end{equation}
Now combining \eqref{eq: E-L 2}, \eqref{eq: 2FF bounded by gamma} and \eqref{eq: better r bound} we get
\begin{equation}\label{eq: gamma energy on annulus}
\begin{split}
    (\frac{1}{2}-\gamma)\int_{A(p,q)}|\nabla u_k|^2dx\leq& C\varepsilon_k\Big(\int_{A(p,q)}|(\Delta^2u_k)^\top|+|\nabla u_k|(|\nabla^3u_k|+|\nabla^2u_k|)dx_g\\
    &+\int_{p}^q\frac{1}{r}\int_{B_r}|\Delta_g u|^2dx_gdr\Big)+\frac{1}{2}\int_{\partial A(p,q)}r|\nabla u_k|^2ds
\end{split}
\end{equation}
again for any $\gamma>0$, $R_0$ large enough, $R$ small enough, $k$ large enough and any $R_0r_k<p<q<R$.
We then have, using \eqref{eq: pointwise small on annulus}, \eqref{eq: biharmonic on ball} and \eqref{eq: biharmonic on annulus}
\begin{equation*}
    \int_{A(p,q)}|(\Delta_g^2u_k)^\top|+|\nabla u_k|(|\nabla^3u_k|+|\nabla^2u_k|)dx_g+\int_{p}^q\frac{1}{r}\int_{B_r}|\Delta_g u|^2dx_gdr\leq C\frac{1}{p^2}(1+\log q/p).
\end{equation*}
Now setting $p(t)=2^{l-t}Rr_k$ and $q(t)=2^{l+t}Rr_k$ and $f_k(t)=\int_{A(p(t),q(t))}|\nabla u_k|^2$ for any $l,t$ s.t. $Rr_k<p(t)<q(t)<R_0$ we get
\begin{equation*}
    (\frac{1}{2}-\gamma)f_k(t)\leq \frac{1}{2\log2}f_k'(t)+C\frac{\varepsilon_k}{r_k^2}(1+t).
\end{equation*}
Multiplying this differential inequality by $2^{1-(1-2\gamma)t}$ and integrating gives
\begin{equation*}
    [-\frac{1}{\log2}2^{-(1-2\gamma)t}f_k(t)]_{t_0}^{t_1}\leq [-C\frac{\varepsilon_k}{r_k^2}2^{-(1-2\gamma)t}\frac{1+(1-2\gamma)(\log2)(1+t)}{(1-2\gamma)^2(\log2)^2}]_{t_0}^{t_1}.
\end{equation*}
Setting $t_0=1$ and $\gamma=\frac{1}{4}$ gives, for $t>1$
\begin{equation*}
    f_k(1)\leq C(2^{-t/2}f_k(t)+\frac{\varepsilon_k}{r_k^2}).
\end{equation*}
This tells us that we have some sort of decay of the energy.
Now set $P=(\log R_0-\log Rr_k)/\log2$, $1\leq l\leq \lfloor{P}\rfloor-1$ then $t=L_l:=\min\{l,P-l\}$. Now
\begin{equation*}
    E_{\varepsilon_k}[u_k;A(2^{l-1}Rr_k,2^{l+1}Rr_k)]\leq f_k(1)+C\frac{\varepsilon_k}{r_k^2}\leq C2^{-L_l/2}E[u_k,A(Rr_k,R_0)]+C\frac{\varepsilon_k}{r_k^2}.
\end{equation*}
Then using the low energy regularity lemma, Lemma \ref{lemma: regularity theorem} we get
\begin{equation*}
\begin{split}
    \text{osc}_{A(2^{l-1/2}Rr_k,2^{l+1/2}Rr_k)}(u_k)\leq C2^{-L_l/4}\sqrt{E[u_k,A(Rr_k,R_0)]}+C\sqrt{\frac{\varepsilon_k}{r_k^2}}.
\end{split} 
\end{equation*}
By summing over these annuli, noting that $P\leq C(\log(1/r_k)+1)$, we get
\begin{equation*}
\begin{split}
    \text{osc}_{A(2Rr_k,R_0/4)}(u_k)\leq&
    C\sqrt{E[u_k,A(Rr_k,R_0)]}\sum_{l=1}^{\lfloor{P}\rfloor-1}2^{-L_l/4}+CP\sqrt{\frac{\varepsilon_k}{r_k^2}}\\
    \leq&C\sqrt{E[u_k,A(Rr_k,R_0)]}+C(\log(1/r_k)+1)\sqrt{\frac{\varepsilon_k}{r_k^2}}
    \\ &\rightarrow0  \text{   as }k\rightarrow\infty.
\end{split}
\end{equation*}
Where we have used the fact that $\nu=0$ implies $\mu=0$.
This completes the $\nu=0$ case.
\end{proof}



\bibliographystyle{abbrv}
\bibliography{Bibliography}

\end{document}